\newtheorem{intro_thm}{Theorem}
\theoremstyle{cited}
\newtheorem{teor}{Theorem}[section]
\newtheorem{lem}[teor]{Lemma}
\newtheorem{prop}[teor]{Proposition}
\theoremstyle{definition}
\newtheorem{deft}[teor]{Definition}
\theoremstyle{remark}
\newcommand{\psl}{PSL(2,\mathbb{C})}
\newcommand{\psln}{PSL(n,\mathbb{C})}
\newcommand{\proj}{\mathbb{P}^1(\mathbb{C})}
\newcommand{\flg}{\mathscr{F}(n,\mathbb{C})}
\newcommand{\hyp}{\mathbb{H}}
\newcommand{\Vol}{\textup{Vol}}
\title[Rigidity at infinity for the tetrahedral lattice]{Rigidity at infinity for the Borel function of the tetrahedral reflection lattice}
\author{Alessio Savini}
\begin{document}

\maketitle

\begin{abstract}
If $\Gamma$ is the fundamental group of a complete finite volume hyperbolic $3$-manifold, Guilloux \cite{guilloux:articolo} conjectured that the Borel function on the $PSL(n,\mathbb{C})$-character variety of $\Gamma$ should be rigid at infinity, that is it should stay bounded away from its maximum at ideal points. 

In this paper we prove Guilloux's conjecture in the particular case of the reflection group associated to a regular ideal tetrahedron of $\mathbb{H}^3$. 
\end{abstract}

%*******************************************************
%Materiale Iniziale
%*******************************************************

\section{Introduction}

Let $\Gamma$ be the fundamental group of a finite volume complete hyperbolic $3$-manifold $M$. In the attempt to explore the rigidity properties of $\Gamma$, many mathematicians studied the space of representations of $\Gamma$ inside a semisimple Lie group $G$. For instance, when $G=\psln$, Bucher, Burger and Iozzi \cite{iozzi:articolo} introduced the \emph{Borel function} on the character variety $X(\Gamma,\psln)$ using bounded cohomology techniques. The Borel function is continuous with respect to the topology of pointwise convergence and its absolute value is bounded by the volume of $M$ multiplied by a suitable constant depending on $n$. Additionally, the maximum is attained only by the conjugacy class of the representation $\pi_n \circ i$ (or by its complex conjugated), where $i:\Gamma \rightarrow \psl$ is the standard lattice embedding and $\pi_n:\psl \rightarrow \psln$ is the irreducible representation. When $n=2$ the Borel function boils down to the volume function introduced for instance by Dunfield \cite{dunfield:articolo} or Francaviglia \cite{franc04:articolo} and its rigid behaviour can be translated in terms of Mostow Rigidity Theorem \cite{mostow:articolo}. 

Beyond their intrinsic interest, the previous results have several important consequences for the birationality properties of the character variety $X(\Gamma,\psln)$. For example, both Dunfield~\cite{dunfield:articolo} and Klaff-Tillmann~\cite{tillmann:articolo} used the properties of the volume function to prove that the component of the variety $X(\Gamma,\psl)$ containing the holonomy of $M$ is birational to its image through the peripheral holonomy map, which is obtained by restricting any representation to the fundamental groups of the cusps. A similar result has been obtained by Guilloux~\cite{guilloux:articolo} for the geometric component of the $\psln$-character variety, but the author needed to conjecture that outside of an analytic neighborhood of the class of the representation $\pi_n \circ i$ the Borel function is bounded away from its maximum value. 

In this paper we focus our attention on the reflection group associated to a regular ideal tetrahedron and we prove a weak version of~\cite[Conjecture 1]{guilloux:articolo} for every $n\geq 2$.

\begin{intro_thm}\label{convergence}
Let $\Gamma$ be the reflection group associated to the regular ideal tetrahedron $(0,1,e^\frac{\pi i}{3},\infty)$ and let $\Gamma_0 < \psl$ be a torsion-free finite index subgroup of $\Gamma$. Let $\rho_k:\Gamma_0 \rightarrow \psln$ be a sequence of representations and assume that each $\rho_k$ admits an equivariant measurable map $\varphi_k:\proj \rightarrow \flg$. Suppose that $\lim_{k \to \infty} \beta_n(\rho_k)={{n+1}\choose{3}}\Vol(\Gamma_0 \backslash \mathbb{H}^3)$. Then there must exist a sequence $(g_k)_{k \in \mathbb{N}}$ of elements in $\psln$ such that for every $\gamma \in \Gamma_0$ it holds $$\lim_{k \to \infty} g_k \rho_k(\gamma) g_k^{-1}=(\pi_n \circ i) (\gamma), $$ where $i:\Gamma_0 \rightarrow \psl$ is the standard lattice embedding and $\pi_n: \psl \rightarrow \psln$ is the irreducible representation.
\end{intro_thm}

This phenomen, called \emph{rigidity at infinity}, was proved by the author and Francaviglia \cite[Theorem 1.1]{savini:articolo} for $n=2$ and any non-uniform lattice of $\psl$ (notice that the same phenomenon holds for all rank-one representations of any rank-one lattice \cite{Sav20}). However, since in that case our proof exploited the existence of natural maps for non-elementary representations (see for instance ~\cite{besson95:articolo,besson96:articolo,besson98:articolo,franc09:articolo}), we could not use the same argument here. 

For our purposes, the existence of a boundary map $\varphi_k$ is crucial. Indeed, the possibility to express the Borel invariant $\beta_n(\rho_k)$ as the integral over a fundamental domain for $\Gamma_0 \backslash \psl$ of the pullback of the Borel cocycle along the boundary map $\varphi_k$ together with the maximality hypothesis allows us to prove the existence of a suitable sequence $(g_k)_{k \in \mathbb{N}}$ of elements in $\psln$ such that the sequence $(g_k \varphi_k(\gamma \underline{\xi}))_{k \in \mathbb{N}}$ is bounded, where $\underline{\xi}=(0,1,e^\frac{\pi i}{3},\infty)$ and $\gamma$ is any element of $\Gamma_0$. The boundedness of the previous sequence implies the boundedness of  $(g_k \rho_k g_k^{-1}(\gamma))_{k \in \mathbb{N}}$ for every $\gamma \in \Gamma_0$ and hence we can conclude. 

\subsection*{Plan of the paper} The first section is dedicated to preliminary definitions. We start with the notion of bounded cohomology for a locally compact group, then we recall the definition of the Borel cocycle and of the Borel class. We finally introduce the Borel invariant for a representation $\rho:\Gamma \rightarrow \psln$ and we recall its rigidity property.
The second section is devoted to the proof of the main theorem. 

\subsection*{Acknowledgements} I would like to thank Alessandra Iozzi for having proposed me this nice problem. I am also grateful to Marc Burger and Stefano Francaviglia for the enlightening discussions and the help they gave me. I thank Michelle Bucher and Antonin Guilloux for the interest they showed about this problem. I finally thank the referees for their suggestions given to improve the quality of the paper.

%%*********************************************************
%%Materiale Principale
%%*********************************************************

%%*********************************************************
%%Sezione 1
%%*********************************************************

\section{Preliminary definitions}

\subsection{Bounded cohomology of semisimple Lie groups}

Given a locally compact group $G$ there exist several ways to introduce the notion of continuous bounded cohomology of $G$. The standard one relies on the complex of continuous bounded functions on tuples of $G$. Since in this paper we deal only with semisimple Lie groups and their lattices, we are going to follow a different approach. Indeed, in this case, one can introduce the continuous bounded cohomology of $G$ via the complex of essentially bounded measurable functions on the Furstenberg boundary. This definition is equivalent to the standard one thanks to the work  by Burger and Monod \cite[Corollary 1.5.3]{burger2:articolo}. More generally, one can use any strong resolution of $\mathbb{R}$ via relatively injective $G$-modules to compute the continuous bounded cohomology of $G$. For a more detailed exposition about these notions, we refer the reader to Monod's book \cite{monod:libro}.

Let $G$ be a semisimple Lie group of non-compact type and let $B(G)$ be its Furstenberg boundary. The latter can be identified with $G/P$, where $P$ is a minimal parabolic subgroup of $G$. For instance, when $G=\psl$, its Furstenberg boundary is $B(G)=\proj$. Recall that $B(G)$ admits a canonical quasi-invariant measure obtained by the Haar measurable structure on the group $G$. 

We define the space of \emph{bounded measurable functions} on the Furstenberg boundary as
$$
\mathcal{B}^\infty(B(G)^{n+1},\mathbb{R}):= \{ f:B(G)^{n+1} \rightarrow \mathbb{R} \ | \ \textup{$f$ is measurable} \} \ .
$$
By introducing the usual equivalence relation $f \sim g$, where $f$ and $g$ are equivalent if and only if they coincide up to a measure zero subset, we can define the space of \emph{essentially bounded measurable functions} as 
$$
L^\infty(B(G)^{n+1},\mathbb{R}):=\mathcal{B}^\infty(B(G)^{n+1},\mathbb{R}) / \sim \ .
$$
From now on, with an abuse of notation, we are going to write only $f$ when we refer to its equivalence class $[f]_\sim$. 

The space $L^\infty(B(G)^{n+1},\mathbb{R})$ admits a natural $G$-module structure given by 
$$
(gf)(\xi_0,\cdots,\xi_n):=f(g^{-1}\xi_0,\cdots,g^{-1}\xi_n) \ ,
$$
for every element $g \in G$, every function $f \in L^\infty(B(G)^{n+1},\mathbb{R})$ and almost every $\xi_0,\cdots,\xi_n \in B(G)$. Together with the \emph{standard homogeneous coboundary operator}

\[
\delta^n:L^\infty(B(G)^{n+1},\mathbb{R}) \rightarrow L^\infty(B(G)^{n+2},\mathbb{R}) ,
\]
\[
\delta^n f(\xi_0,\ldots,\xi_{n+1})=\sum_{i=0}^{n+1} (-1)^i f(\xi_0,\ldots,\xi_{i-1},\xi_{i+1}, \ldots \xi_{n+1}) ,
\]
we obtain a cochain complex $(L^\infty(B(G)^{\bullet+1},\mathbb{R}),\delta^\bullet)$. 

If we define the space of \emph{$G$-invariant functions} as
$$
L^\infty(B(G)^{n+1},\mathbb{R})^G:=\{ f \in L^\infty(B(G)^{n+1},\mathbb{R}) \ | \ gf=f \ , \forall g \in G \} \ ,
$$
we can restrict the coboundary operators to that collection of spaces getting a subcomplex $(L(B(G)^{\bullet+1},\mathbb{R})^G;\delta^\bullet_|)$. 

\begin{deft}\label{def:continuous:bounded}
The \textit{continuous bounded cohomology} of $G$ is the cohomology of the subcomplex $(L^\infty(B(G)^{\bullet+1},\mathbb{R})^G;\delta^\bullet_|)$ and it is denoted by $H^\bullet_{cb}(G,\mathbb{R})$. In a similar way, if $\Gamma < G$ is a lattice, its bounded cohomology groups are given by the cohomology of the subcomplex $(L^\infty(B(G)^{\bullet+1},\mathbb{R})^\Gamma;\delta^\bullet_|)$ and they are denoted by $H^\bullet_b(\Gamma,\mathbb{R})$. 
\end{deft}

Notice that in the case of a lattice we omitted the subscript $c$, since the topology inherited by $\Gamma$ from $G$ is the discrete one and the continuity issue becomes trivial. For both the group $G$ and its lattices, from now on, we are going to omit the real coefficients when we refer to the continuous bounded cohomology groups. 

Remarkably, one can consider the complex of bounded measurable functions $(\mathcal{B}^\infty(B(G)^{\bullet+1},\mathbb{R}),\delta^\bullet)$ to gain precious information about the continuous bounded cohomology of $G$. Here $\delta^\bullet$ still denotes the standard coboundary operator. 

\begin{prop}\cite[Proposition 2.1]{burger:articolo}\label{prop:strong}
 If we add to the complex $(\mathcal{B}^\infty(B(G)^{\bullet+1},\mathbb{R}),\delta^\bullet)$ the inclusion of coefficient $\mathbb{R} \hookrightarrow \mathcal{B}^\infty(B(G),\mathbb{R})$, 
we get back a strong resolution of $\mathbb{R}$. Hence there exists a canonical map 
\[
\mathfrak{c}^\bullet:H^\bullet(\mathcal{B}^\infty(B(G)^{\bullet+1},\mathbb{R})^G) \rightarrow H^\bullet_{cb}(G).
\]
\end{prop}

We conclude the section by observing that both Definition \ref{def:continuous:bounded} and Proposition \ref{prop:strong} are still valid if we consider the subcomplex of alternating cochains. Recall that an essentially bounded function or a bounded measurable function $f:B(G)^{n+1} \rightarrow \mathbb{R}$ is \emph{alternating} if for every permutation $\sigma \in S_{n+1}$ it holds
$$
f(x_{\sigma(0)},\ldots,x_{\sigma(n)})=\textup{sgn}(\sigma)f(x_0,\ldots,x_n) \ ,
$$
where $\textup{sgn}$ is the sign of the permutation.

\subsection{The Borel cocycle}

A \textit{complete flag} $F$ of $\mathbb{C}^n$ is a sequence of nested subspaces
\[
F^0 \subset F^1 \subset  \ldots F^{n-1} \subset F^n
\]
where $\dim_{\mathbb{C}} F^i=i$ for $i=1,\ldots,n$. Let $\flg$ be the space parametrizing all the possible complete flags of $\mathbb{C}^n$. This is a complex variety which can be thought of as a homogeneous space obtained as the quotient of $\psln$ by any of its Borel subgroups. In this way $\flg$ is the realization of the Furstenberg boundary associated to $\psln$. 

An \textit{affine flag} $(F,v)$ of $\mathbb{C}^n$ is a complete flag $F$ together with a decoration $v=(v^1,\ldots,v^n) \in (\mathbb{C}^n)^n$ such that
\[
F^i=\mathbb{C} v^i+F^{i-1}
\]
for $i=1,\ldots n$. For any $4$-tuple of affine flags $\mathbf{F}=((F_0,v_0),\ldots,(F_3,v_3))$ of $\mathbb{C}^n$ and given a multi-index $\mathbf{J} \in \{ 0,\ldots,n-1\}^4$, we set

\[
\mathcal{Q}(\mathbf{F},\mathbf{J}):=\Bigg[ \frac{\langle F_0^{j_0+1},\ldots,F_3^{j_3+1} \rangle}{\langle F_0^{j_0},\ldots,F_3^{j_3} \rangle};(v_0^{j_0+1},\ldots,v_3^{j_3+1})\Bigg] . 
\]

In the notation above we denoted by $[V,(x_0,\ldots,x_k)]$ the equivalence class of a complex $m$-dimensional vector space $V$ together with a $(k+1)$-tuple of spanning vectors $(x_0,\ldots,x_k) \in V^{k+1}$ modulo the diagonal action of $GL(m,\mathbb{C})$. 

Since the hyperbolic volume function $\textup{Vol}:\proj^4 \rightarrow \mathbb{R}$ can be thought of as defined on $(\mathbb{C}^2 \setminus \{0\})^4$, we can actually extend it by zero on the whole $(\mathbb{C}^2)^4$. Using such an extension, we define the cocycle $B_n$ as 
\begin{equation}\label{eq:borel:quotients}
B_n((F_0,v_0),\ldots,(F_3,v_3)):=\sum_{\mathbf{J} \in \{0,\ldots,n-1\}^4} \Vol \mathcal{Q}(\mathbf{F},\mathbf{J}).
\end{equation}
where we are considering the volume function exactly when the dimension of the vector space appearing in $\mathcal{Q}(\mathbf{F},\mathbf{J})$ is equal to $2$ and we set the value of the volume equal to zero otherwise. 

In the particular case of generic flags (see Definition \ref{def:general:position}), the definition of the Borel cocycle is given by Goncharov \cite{Gon93}. Its extension to the whole space of $4$-tuples of flags is due to Bucher, Burger and Iozzi, who proved the following 

\begin{prop}\cite[Corollary 13, Theorem 14]{iozzi:articolo}\label{prop:cocycle}
The function $B_n$ does not depend on the decoration used to compute it and hence it descends naturally to a function 
$$
B_n:\flg^4 \rightarrow \mathbb{R} \ ,
$$
on $4$-tuples of flags which is defined everywhere. Moreover that function is a measurable $\psln$-invariant alternating cocycle whose absolute value is bounded by ${n+1 \choose 3}\nu_3$, where $\nu_3$ is the volume of a positively oriented regular ideal tetrahedron in $\mathbb{H}^3$.
\end{prop}

As a consequence of Proposition \ref{prop:strong}, the function $B_n$ determines naturally a bounded cohomology class in $H^3_{cb}(\psln)$, which we are going to denote by $\beta_b( n)$. 
\begin{deft}
The cocycle $B_n$ is called \textit{Borel cocycle} and the class $\beta_b(n)$ is called \textit{bounded Borel class}. 
\end{deft}

Bucher, Burger and Iozzi \cite[Theorem 2]{iozzi:articolo} proved that the cohomology group $H^3_{cb}(\psln)$ is a one dimensional real vector space generated by the bounded Borel class. This generalizes a previous result by Bloch \cite{bloch:libro} for $\psl$.

We are going now to recall the main rigidity property of the Borel cocycle. Denote by $\mathcal{V}_n:\proj \rightarrow \flg$ the Veronese map. Recall that, if $\mathcal{V}_n^i(\xi)$ is the $i$-dimensional space of the flag $\mathcal{V}_n(\xi)$ and $\xi$ has homogeneous coordinates $[x:y]$, then we define $\mathcal{V}_n^{n-i}(\xi)$ as the $(n-i)$-dimensional subspace with basis  
\[
\left( 0, \ldots, 0, x^i, {{i}\choose{1}}x^{i-1}y,\ldots, {{i}\choose{j}}x^{i-j}y^j, \ldots, {{i}\choose{i-1}}xy^{i-1},y^i,0,\ldots,0 \right)^T
\]
where the first are $k$ zeros and the last are $n-i-k-1$ zeros, for $k=0,\ldots,n-1-i$. 

\begin{deft}
Let $(F_0,\ldots,F_3) \in \flg^4$ be a $4$-tuple of flags. We say that the $4$-tuple is \textit{maximal} if
\[
|B_n(F_0,\ldots,F_3)|={{n+1}\choose{3}}\nu_3 .
\]
\end{deft}
 
Maximal flags can be described in terms of the Veronese embedding. More precisely, it holds the following

\begin{teor}\cite[Theorem 19, Corollary 20]{iozzi:articolo}\label{teor:maximal:flags}
Let $(F_0,F_1,F_2,F_3)$ be a maximal $4$-tuple of flags in $\flg$. Then there must exist a unique element $g \in \psln$ such that
\[
g(F_0,F_1,F_2,F_3)=(\mathcal{V}_n(0),\mathcal{V}_n(1),\mathcal{V}_n(\pm e^{\frac{i\pi}{3}}),\mathcal{V}_n(\infty))
\]
where the sign $\pm$ reflects the sign of $B_n(F_0,\ldots,F_3)$. Additionally if $(F_0,F_1,F_2,F_3)$ and $(F_0,F_1,F_2,F'_3)$ are both maximal with the same sign, then $F_3=F_3'$.
\end{teor}

Now we discuss the continuity property of the Borel cocycle. The latter is measurable and not continuous since for instance one can consider a maximal $4$-tuple of flags $(F_0,F_1,F_2,F_3)$ and apply the sequence $(\pi_n(g)^k)_{k \in \mathbb{N}}$ to it, where $g \in \psl$ is loxodromic and $\pi_n:\psl \rightarrow \psln$ is the irreducible representation. In this way we get a sequence of maximal $4$-tuples which degenerates at the limit and for that sequence the Borel cocycle is not continuous. 

Nevertheless one can say something relevant about continuity when a $4$-tuple of flags $(F_0,F_1,F_2,F_3)$ satisfies a particular condition called \emph{general position}. 

\begin{deft}\label{def:general:position} 
Let $(F_0,F_1,F_2,F_3) \in \flg^4$ be a $4$-tuple of flags. We say that the flags are in \emph{general position} if 
$$
\dim_{\mathbb{C}} \langle F_0^{j_0}, \ldots F^{j_3}_3 \rangle=j_0 + \ldots + j_3 \ ,
$$
whenever $j_0 +  \ldots + j_3 \leq n$.
\end{deft}

For a $4$-tuple of flags in general position and a multi-index $\mathbf{J}$ such that $j_0+\cdots+j_3=n-2$, the projection of the $4$-tuple $(v_0^{j_0+1},\cdots,v_3^{j_3+1})$ to the $2$-dimensional vector space appearing in $\mathcal{Q}(\mathbf{F},\mathbf{J})$ gives us back a $4$-tuple of distinct points on a projective line. Since such a $4$-tuple varies continuously and the volume function $\Vol$ is continuous on $4$-tuples of distinct points in $\proj$, we get that the Borel cocycle is continuous on $\psln$-orbits of $4$-tuples of flags in general position.

The Borel cocycle can be used to understand when $4$ flags are in general position. Indeed we have the following

\begin{lem}\label{lemma:general:position}
Let $(F_0,F_1,F_2,F_3) \in \flg^4$ be a $4$-tuple of flags. If  
$$
|B_n(F_0,F_1,F_2,F_3) - {n+1 \choose 3}\nu_3| < \varepsilon
$$
for some $\varepsilon >0$ sufficiently small, then the flags are in general position.
\end{lem}

\begin{proof}
We are going to denote by $C_k(n)$ the number of all the possible partitions of $n$ by $k$ integers. 

Our proof will follow the line of \cite[Lemma 15]{iozzi:articolo}. We will argue by induction on $n$. Suppose $n=2$. The flags boil down to lines in $\mathbb{C}^2$ and those lines are in general position only if they are distinct. Since the Borel invariant is equal to zero when evaluated at two lines that coincide, the claim follows. 

Assume now that the statement is true for $n-1$. Given a flag $F \in \flg$ we are going to denote by $\overline F \in \mathscr{F}(\mathbb{C}^n/ \langle F^1_0 \rangle)$ the complete flag of the quotient $\mathbb{C}^n/\langle F^1_0 \rangle$ obtained by projecting $F$. Take the minimal value $j$ such that $F^1_0 \subset F^j_1$. 

We define the sets
\begin{align*}
\mathcal{J}_1&:=\{ \mathbf{J} \in \{0,\ldots,n-1\}^4 | j_0=j_1=0, 0 \leq j_2,j_3\leq n-2\} , \\
\mathcal{J}_2&:=\{ \mathbf{J} \in \{0,\ldots,n-1\}^4| j_0=0,0<j_1\leq n-2, 0 \leq j_2,j_3 \leq n-2\} , \\
\mathcal{J}_3&:=\{ \mathbf{J} \in \{0,\ldots,n-1\}^4|0<j_0\leq n-2, 0 \leq j_1,j_2,j_3 \leq n-2\} .
\end{align*}
By following the same computation of Bucher, Burger and Iozzi \cite[Equation 8, Lemma 17]{iozzi:articolo}, we have 
\begin{align}\label{eq:sum:sets}
\varepsilon &> {n+1 \choose 3}\nu_3 - B_n(F_0,\ldots,F_3)=\\
&=C_4(n-2)\nu_3 - \sum_{\mathbf{J} \in \{0,\ldots,n-1\}^4} \textup{Vol}\mathcal{Q}(\mathbf{F},\mathbf{J})= \nonumber\\
&= \left(C_4(n-3)\nu_3 - \sum_{\mathcal{J}_3} \textup{Vol}\mathcal{Q}(\mathbf{F},\mathbf{J})\right)+\left(C_3(n-3)\nu_3-\sum_{\mathcal{J}_2} \textup{Vol}\mathcal{Q}(\mathbf{F},\mathbf{J})\right)+ \nonumber \\
&+\left(C_2(n-2)\nu_3 - \sum_{\mathcal{J}_1} \textup{Vol}\mathcal{Q}(\mathbf{F},\mathbf{J})\right) \,  \nonumber
\end{align}
where we used the fact that $C_4(n-2)={n+1 \choose 3}$ and the recursive relation $C_k(n)=C_{k-1}(n)+C_k(n-1)$. Notice that in the last line of the equation we removed the vanishing terms whose multi-index $\mathbf{J}$ does not lie in any $\mathcal{J}_i$ for $i=1,2,3$.

It follows that if the Borel invariant is $\varepsilon$-near to its maximal value, then the sums over the sets $\mathcal{J}_1,\mathcal{J}_2,\mathcal{J}_3$ are $\varepsilon$-near to their maximal values. By the symmetry in the roles played by the indices appearing in $\mathbf{J}$, we must have
$$
\sum_{j_0=j_2=0,\  0 \leq j_1,j_3\leq n-2} \textup{Vol}\mathcal{Q}(\mathbf{F},\mathbf{J}) > C_2(n-2)\nu_3 - \varepsilon \ . 
$$

Using the particular choice of $j$ and following the same argument of \cite[Lemma 15]{iozzi:articolo}, we get that 
$$
(j-1)\nu_3 \geq C_2(n-2)\nu_3 -\varepsilon=(n-1)\nu_3 - \varepsilon,
$$
and since $\varepsilon$ is sufficiently small and $j$ is an integer, $j$ must be equal to $n$. This implies that $F^1_0 \subset F^n_1 \setminus F^{n-1}_1$. A similar condition holds also for $F_2$ and $F_3$. In this way we get that 
$$
\overline{F^i_k}=\overline{F_k}^i \ ,
$$
for $k=1,2,3$ and $0 \leq i \leq n-1$, whereas 
$$
\overline{F^i_0}=\overline{F_0}^{i-1} \ ,
$$
for $i \geq 1$. 

Consider now $0 \leq j_0,j_1,j_2,j_3 \leq n$ so that $j_0+j_1+j_2+j_3 \leq n$. The case $j_0=\ldots=j_3=0$ is trivial, so we will assume $j_0 \geq 1$. By Equation \eqref{eq:sum:sets} we know that the sum over $\mathcal{J}_3$ is $\varepsilon$-near to its maximal value $C_4(n-3)\nu_3$. Thanks to \cite[Equation 9]{iozzi:articolo}, we can write
$$
B_{n-1}(\overline{F_0},\ldots,\overline{F_3})=\sum_{\mathcal{J}_3}\textup{Vol}\mathcal{Q}(\mathbf{F},\mathbf{J}) \geq C_4(n-3)\nu_3 -\varepsilon \ ,
$$
hence $\overline{F_0},\ldots,\overline{F_3}$ are in general position by the inductive hypothesis. In this way we get
\begin{align*}
\textup{dim}_{\mathbb{C}}\langle F_0^{j_0} ,\ldots, F_3^{j_3} \rangle&=\textup{dim}_{\mathbb{C}}\langle \overline{F_0^{j_0}} ,\ldots,  \overline{F_3^{j_3}} \rangle+1=\\ 
&=\textup{dim}_{\mathbb{C}}\langle \overline{F_0}^{j_0-1} ,\overline{F_1}^{j_1},\ldots, \overline{F_3}^{j_3} \rangle+1=\\
&=(j_0-1)+j_1+j_2+j_3+1=\\
&=j_0+j_1+j_2+j_3 \ ,
\end{align*}
and this finishes the proof of the lemma. 

\end{proof}

The previous result is crucial in the proof of the following 

\begin{lem} \label{lemma:maximal:limit}
Let $(F_0^{(k)},\ldots,F_3^{(k)})_{k \in \mathbb{N}}$ be a sequence of $4$-tuples of flags such that 
$$
\lim_{k \to \infty} B_n(F_0^{(k)},\ldots,F_3^{(k)})={n+1 \choose 3}\nu_3 \ .
$$
Given a positively oriented regular ideal tetrahedron $\underline{\xi}=(\xi_0,\ldots,\xi_3)$, there exists a sequence $(g_k)_{k \in \mathbb{N}}$ of elements $g_k \in \psln$ such that 
$$
\lim_{k \to \infty} g_kF_i^{(k)}=\mathcal{V}_n(\xi_i) \ ,
$$
for $i=0,\ldots,3$. 
\end{lem}

\begin{proof}
By hypothesis we know that for $k$ large enough it holds
$$
| B_n(F^{(k)}_0,\ldots,F^{(k)}_3)- {n+1 \choose 3}\nu_3| < \varepsilon \ ,
$$
for $\varepsilon > 0$ fixed. By Lemma \ref{lemma:general:position}, up to discarding the first terms of the sequence, we can suppose that $F_0^{(k)},F_1^{(k)},F_2^{(k)},F^{(k)}_3$ are in general position. If $F_0,F_1$ are flags and $L$ is a line, using the transitivity of $\psln$ on triples $(F_0,F_1,L)$ in general position \cite[Lemma 23]{iozzi:articolo}, we can find a unique element $g_k \in \psln$ such that
$$
g_kF_0^{(k)}=\mathcal{V}_n(\xi_0), \ \ \ g_kF_1^{(k)}=\mathcal{V}_n(\xi_1), \ \ \ g_k(F_2^{(k)})^1=\mathcal{V}_n^1(\xi_2) \ .
$$
On the subset of $4$-tuples of flags $(F_0,F_1,F_2,F_3)$ in general position such that $F_0=\mathcal{V}_n(\xi_0)$, $F_1=\mathcal{V}_n(\xi_1)$ and $F^1_2=\mathcal{V}_n^1(\xi_2)$ the Borel cocycle is continuous (since we fixed a set of representatives in the $\psln$-orbits) and thus we argue that
$$
\lim_{k \to \infty} g_k(F_3^{(k)})^1=\mathcal{V}_n^1(\xi_3) \ ,
$$
Imitating the inductive argument in the proof of \cite[Theorem 19]{iozzi:articolo} one can show that the same holds for the other subspaces of the flags $F_2^{(k)}$ and $F^{(k)}_3$. 
\end{proof}

\subsection{The Borel invariant for representations into $\psln$}

Let $\Gamma$ be a non-uniform lattice of $\psl$ without torsion and let $\rho:\Gamma \rightarrow \psln$ be a representation. Define $M:=\Gamma \backslash \mathbb{H}^3$. It is well known that we can decompose the manifold $M$ as  $M = N \cup \bigcup_{i=1}^h C_i$, where $N$ is a compact core of $M$ and for every $i=1,\ldots,h$ the component $C_i$ is a cuspidal neighborhood diffeomorphic to $T_i \times (0,\infty)$, where $T_i$ is a torus. Since the fundamental group of the boundary $\partial N$ is abelian, the maps $i^*_b:H^k_b(M,M \setminus N) \rightarrow H^k_b(M)$ induced at the level of bounded cohomology groups are isometric isomorphisms for $k \geq 2$ (see~\cite{bucher:articolo}). Moreover, it holds $H^k_b(M,M \setminus N) \cong H^k_b(N, \partial N)$ by homotopy invariance of bounded cohomology. If we denote by $c:H^k_b(N,\partial N) \rightarrow H^k(N,\partial N)$ the comparison map, we can consider the composition
\[
\xymatrix{
H^3_{b}(\psln) \ar[r]^{\rho^*_b} & H^3_b(\Gamma) \cong H^3_b(M) \ar[r]^{\hspace{15pt}(i^*_b)^{-1}} & H^3_b(N,\partial N) \ar[r]^c & H^3(N,\partial N),
}
\]
where the isomorphism that appears in this composition holds by Gromov's Mapping Theorem \cite{gromov:articolo}. 

\begin{deft}
The \textit{Borel invariant} associated to a representation \mbox{$\rho: \Gamma \rightarrow \psln$} is given by 
\[
\beta_n(\rho):=\langle (c \circ (i^*_b)^{-1} \circ (\rho^*_b))\beta_b(n),[N,\partial N] \rangle,
\]
where the brackets $\langle \cdot , \cdot \rangle$ indicate the Kronecker pairing and $[N,\partial N] \in H_3(N,\partial N)$ is a fixed fundamental class. 
\end{deft}

The definition of the Borel invariant $\beta_n(\rho)$ is due to Bucher, Burger and Iozzi \cite{iozzi:articolo}. One can check that $\beta_n(\rho)$ does not depend on the choice of the compact core $N$ and it can be suitably extended also to lattices with torsion. We want to remark that there exist other different approaches to the Borel invariant, for instance the one given by Dimofte, Gabella and Goncharov \cite{gabella:articolo}. However, since they are all equivalent, we will consider \cite{iozzi:articolo} as our main reference.

The Borel invariant $\beta_n(\rho)$ remains unchanged on the $\psln$-conjugacy class of a representation $\rho$, hence it defines naturally a function on the character variety $X(\Gamma,\psln)$ which is continuous with respect to the topology of the pointwise convergence (this is a consequence of Proposition \ref{prop:formula}, for instance). This function, called Borel function, satisfies a strong rigidity property.

\begin{teor}\cite[Theorem 1]{iozzi:articolo}. 
Given any representation $\rho:\Gamma \rightarrow \psln$ we have 

\[
|\beta_n(\rho)| \leq {{n+1}\choose{3}}\Vol(M)
\]
and the equality holds if and only if $\rho$ is conjugated to $\pi_n \circ i$ or to its complex conjugate $\overline{\pi_n \circ i}$, where $i:\Gamma \rightarrow \psl$ is the standard lattice embedding and $\pi_n:\psl \rightarrow \psln$ is the irreducible representation. 
\end{teor}

We want to conclude this section by expressing the Borel invariant in terms of boundary maps between Furstenberg boundaries. 
We first recall the definition of the transfer map $\textup{trans}_\Gamma:H^3_b(\Gamma) \rightarrow H^3_{cb}(\psl)$. We can define the map 
\[
\textup{trans}_\Gamma:L^\infty(\proj^{n+1},\mathbb{R})^\Gamma \rightarrow L^\infty(\proj^{n+1},\mathbb{R})^{\psl}
\]
\[
\textup{trans}_\Gamma(c)(x_0,\ldots,x_n):=\int_{\Gamma \backslash PSL(2,\mathbb{C})} c(\bar gx_0,\ldots,\bar gx_n)d\mu(\bar g),
\] 
where $\bar g$ stands for the equivalence class of $g$ in $\Gamma \backslash PSL(2,\mathbb{C})$ and $\mu$ is any invariant probability measure on $\Gamma \backslash PSL(2,\mathbb{C})$. Since $\textup{trans}_\Gamma$ is a cochain map, we get a well-defined map
\[
\textup{trans}_\Gamma: H^\bullet_b(\Gamma) \rightarrow H^\bullet_{cb}(PSL(2,\mathbb{C})).
\]

Given a representation $\rho:\Gamma \rightarrow \psln$ we can consider the composition 
\[
\xymatrix{
H^3_{cb}(\psln) \ar[r]^{\hspace{20pt}\rho^*_b} & H^3_b(\Gamma)\ar[r]^{\hspace{-25pt}\textup{trans}_\Gamma} & H^3_{cb}(\psl).
}
\]
We have the following 
\begin{prop}\cite[Proposition 26, Proposition 28]{iozzi:articolo}\label{prop:formula}
Considering the composition of the map $\rho^\ast_b$ with the transfer map $trans_\Gamma$, it holds
$$
(\textup{trans}_\Gamma \circ \rho^\ast_b)(\beta_b(n))=\frac{\beta_n(\rho)}{\Vol(M)}\beta_b(2) \ .
$$
Given a measurable $\rho$-equivariant map $\varphi:\proj \rightarrow \flg$, we can rewrite the above equation in terms of cochains as follows 
\begin{equation}\label{formula}
\int_{\Gamma \backslash \psl} B_n(\varphi(g\xi_0),\ldots \varphi(g\xi_3))d\mu(g)=\frac{\beta_n(\rho)}{\Vol(M)}\Vol(\xi_0,\ldots,\xi_3) \ ,
\end{equation}
for every $(\xi_0,\ldots,\xi_3) \in \proj^4$. 
\end{prop}
 
%*************************
%SEZIONE 2
%**************************

\section{Proof of the main theorem} 

In this section we are going to prove our main theorem. The proof will follow the strategy adopted by Bucher, Burger and Iozzi for proving \cite[Theorem 29]{iozzi:articolo}. 

Let $\Gamma$ be the reflection group associated to the regular ideal tetrahedron of vertices $(0,1,e^\frac{\pi i}{3},\infty) \in \proj^4$ and let $\Gamma_0 < \psl$ be a torsion-free subgroup of $\Gamma$ of finite index. From now until the end of the paper, with an abuse of notation, we are going to denote by $g$ both a general element in $\psl$ and its equivalence class in $\Gamma_0 \backslash \psl$. 

\begin{lem}\label{almostev}
Let $\Lambda$ be a torsion-free lattice of $\psl$. Suppose $\rho_k:\Lambda \rightarrow \psln$ is a sequence of representations which satisfy $\lim_{k \to \infty} \beta_n(\rho_k)={{n+1}\choose{3}}\Vol(\Lambda \backslash \mathbb{
H}^3)$. Assume there exists a measurable map $\varphi_k:\proj \rightarrow \flg$ which is $\rho_k$-equivariant. Then, up to passing to a subsequence, for almost every $g \in \textup{Isom}(\hyp^3)$ we have

\[
\lim_{k \to \infty} B_n(\varphi_k(g \xi_0),\ldots, \varphi_k(g\xi_3))={{n+1}\choose{3}}\Vol(g\xi_0,\ldots,g\xi_3),
\]
where $(\xi_0,\ldots,\xi_3) \in \proj^4$ are the vertices of a regular ideal tetrahedron.
\end{lem}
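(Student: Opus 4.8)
The plan is to combine the integral formula~\eqref{formula} with the sharp upper bound $|B_n| \le {{n+1}\choose{3}}\nu_3$ on the Borel cocycle, so that the maximality hypothesis on the sequence $\beta_n(\rho_k)$ becomes a statement of $L^1$-convergence, from which almost-everywhere convergence along a subsequence follows by standard measure theory.

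First, I would fix a positively oriented regular ideal tetrahedron $(\xi_0,\ldots,\xi_3)\in\proj^4$, so that $\Vol(\xi_0,\ldots,\xi_3)=\nu_3$, and introduce for each $k$ the bounded measurable function
\[
f_k(g):=B_n(\varphi_k(g\xi_0),\ldots,\varphi_k(g\xi_3))
\]
on a fundamental domain for $\Gamma_0\backslash\psl$; measurability is clear because $\varphi_k$ is measurable and $B_n$ is defined everywhere, while boundedness comes from the bound on $B_n$. Evaluating~\eqref{formula}, which holds for every $4$-tuple, at this particular tetrahedron and recalling that the invariant measure $\mu$ entering the transfer map is a probability measure, I obtain
\[
\int_{\Gamma_0\backslash\psl}f_k(g)\,d\mu(g)=\frac{\beta_n(\rho_k)}{\Vol(M)}\,\nu_3,\qquad M=\Gamma_0\backslash\hyp^3.
\]

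Next I would exploit the one-sided bound. Setting $h_k(g):={{n+1}\choose{3}}\nu_3-f_k(g)$, the estimate $B_n\le{{n+1}\choose{3}}\nu_3$ guarantees $h_k\ge 0$, and integrating over the probability space gives
\[
\int_{\Gamma_0\backslash\psl}h_k(g)\,d\mu(g)={{n+1}\choose{3}}\nu_3-\frac{\beta_n(\rho_k)}{\Vol(M)}\nu_3.
\]
Since $\beta_n(\rho_k)\to{{n+1}\choose{3}}\Vol(M)$ by assumption, the right-hand side tends to $0$; thus the nonnegative functions $h_k$ converge to $0$ in $L^1(\Gamma_0\backslash\psl,\mu)$. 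Convergence in $L^1$ yields a subsequence converging to $0$ almost everywhere, and passing to it gives $f_k(g)\to{{n+1}\choose{3}}\nu_3$ for almost every $g$. Because every $g\in\psl$ preserves orientation and regularity, $(g\xi_0,\ldots,g\xi_3)$ is again a positively oriented regular ideal tetrahedron with $\Vol(g\xi_0,\ldots,g\xi_3)=\nu_3$, so this pointwise limit equals exactly ${{n+1}\choose{3}}\Vol(g\xi_0,\ldots,g\xi_3)$, as claimed.

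I do not expect a genuine obstacle here: the substance of the argument has already been placed in the earlier results, namely the everywhere-defined representative for the pulled-back class that makes~\eqref{formula} valid for all $4$-tuples, together with the sharp bound on $B_n$. The only points demanding care are the normalization of $\mu$, so that the constant ${{n+1}\choose{3}}\nu_3$ and the limit of the right-hand side match, and the observation that the $\psl$-action fixes the orientation of the chosen tetrahedron; the final passage from $L^1$-convergence to almost-everywhere convergence along a subsequence is then purely soft.
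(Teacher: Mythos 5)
Up to its final step your argument is correct, and it is essentially the paper's own proof: you evaluate Equation~(\ref{formula}) at a positively oriented regular ideal tetrahedron, use the one-sided bound $B_n\leq{{n+1}\choose{3}}\nu_3$ to turn the maximality hypothesis into $L^1$-convergence of the nonnegative functions $h_k={{n+1}\choose{3}}\nu_3-f_k$, and extract an almost-everywhere convergent subsequence. One point you leave implicit: the almost-everywhere statement you obtain lives on $\Gamma_0\backslash\psl$, and to promote it to almost every $g\in\psl$ you should observe that $g\mapsto B_n(\varphi_k(g\xi_0),\ldots,\varphi_k(g\xi_3))$ is $\Gamma_0$-invariant, because $\varphi_k$ is $\rho_k$-equivariant and $B_n$ is $\psln$-invariant; the paper states this passage explicitly.

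The genuine gap is in the conclusion: the lemma asserts the convergence for almost every $g\in\textup{Isom}(\hyp^3)$, and $\textup{Isom}(\hyp^3)$ has two connected components, whereas your proof only covers the identity component $\psl$. Your closing observation that ``every $g\in\psl$ preserves orientation'' silently replaces the claim by a weaker one. For $g$ in the orientation-reversing component, $(g\xi_0,\ldots,g\xi_3)$ is a regular ideal tetrahedron of volume $-\nu_3$, so the asserted limit is $-{{n+1}\choose{3}}\nu_3$, and your functions $h_k$ give no information there; what is needed is the other half of the two-sided bound, namely $B_n\geq-{{n+1}\choose{3}}\nu_3$. The paper closes this in the last paragraph of its proof: fix a reflection $\sigma$ in a face of $(\xi_0,\ldots,\xi_3)$ and run the identical $L^1$ argument on the negatively oriented tetrahedron $(\sigma\xi_0,\ldots,\sigma\xi_3)$, for which Equation~(\ref{formula}) forces the integrals of the nonnegative functions ${{n+1}\choose{3}}\nu_3+B_n(\varphi_k(g\sigma\xi_0),\ldots,\varphi_k(g\sigma\xi_3))$ to tend to zero, then pass to a further subsequence; since every orientation-reversing isometry has the form $g\sigma$ with $g\in\psl$, the nested subsequence works on both components simultaneously. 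This is not a cosmetic omission: $\Gamma$ is a reflection group, so in Lemma~\ref{reflection} and in the proof of Theorem~\ref{convergence} the tetrahedra $\gamma\underline{\xi}$ with $\gamma\in\Gamma_{\underline{\xi}}$ occur with both orientations, and the full measure of $\mathcal{T}^\infty$ inside $\mathcal{T}_\textup{reg}$ rests precisely on the orientation-reversing half of this lemma, which your proposal does not establish.
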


\begin{proof}

Let $(\xi_0,\ldots,\xi_3) \in \proj^4$ be the vertices of a regular ideal tetrahedron. Without loss of generality we can assume that $\Vol(\xi_0,\ldots,\xi_3)=\nu_3$. By Proposition \ref{prop:formula} we know that Equation~(\ref{formula}) holds everywhere and hence we can write
\[
\int_{\Lambda \backslash \psl} B_n(\varphi_k(g\xi_0),\ldots,\varphi_k(g\xi_3)) d\mu_{\Lambda \backslash G}(g)=\frac{\beta_n(\rho_k)}{\Vol(\Lambda \backslash \mathbb{H}^3)}\nu_3
\]
for every $k \in \mathbb{N}$, where $\mu_{\Lambda \backslash G}$ is the measure induced by the Haar measure and renormalized to be a probability measure. Since by hypothesis $\lim_{k \to \infty} \beta_n(\rho_k)={{n+1}\choose{3}}\Vol(\Lambda \backslash \mathbb{H}^3)$, by taking the limit on both sides of the equation above we get

\begin{equation}\label{intlimit}
\lim_{k \to \infty} \int_{\Lambda \backslash \psl} B_n(\varphi_k(g\xi_0),\ldots,\varphi_k(g\xi_3)) d\mu_{\Lambda \backslash G}(g)={{n+1}\choose{3}}\nu_3.
\end{equation}

Since by Proposition \ref{prop:cocycle} the Borel cocycle satisfies $|B(F_0,\ldots, F_3)| \leq {{n+1}\choose{3}}\nu_3$, we have that
\[
{{n+1}\choose{3}}\nu_3-B_n(F_0,\ldots,F_3)=|{{n+1}\choose{3}}\nu_3-B_n(F_0,\ldots,F_3)|
\]
for every $(F_0,\ldots,F_3) \in \flg^4$. If we denote by 
$$
\varphi_k^4:\Lambda \backslash \psl \rightarrow \flg^4 \ , \ \ \varphi^4_k(g):=(\varphi_k(g\xi_0),\ldots,\varphi_k(g\xi_3)) \ ,
$$
Equation (\ref{intlimit}) implies
\[
\lim_{k \to \infty}||B_n \circ \varphi_k^4 - {{n+1}\choose{3}}\nu_3||_{L^1(\Lambda \backslash \psl,\mu_{\Lambda \backslash G})}=0.
\]

Since $L^1$-convergence implies the convergence almost everywhere of a suitable subsequence \cite[Section 7]{bartle:libro}, we can extract a subsequence $(\varphi_{k_\ell})_{\ell \in \mathbb{N}}$ such that 
\[
\lim_{\ell \to \infty} B_n (\varphi_{k_\ell}(g\xi_0),\ldots,\varphi_{k_\ell}(g\xi_3))={{n+1}\choose{3}}\nu_3
\] 
for $\mu_{\Lambda \backslash G}$-almost every $g \in \Lambda \backslash \psl$. By the equivariance of the maps $\varphi_{k_\ell}$, the equality above holds for $\mu_G$-almost every $g \in \psl$. 

If $\sigma$ is a reflection along any face of $(\xi_0,\ldots,\xi_3)$, the same argument can be adapted to a tetrahedron $(\sigma\xi_0,\ldots,\sigma\xi_3)$ which has negative maximal volume $\Vol(\sigma\xi_0,\ldots,\sigma\xi_3)=-\nu_3$. Hence the statement follows.
\end{proof}

We can apply the previous theorem for a sequence of representations $\rho_k:\Gamma_0 \rightarrow \psln$ with boundary maps $\varphi_k:\proj \rightarrow \flg$ such that $\lim_{k \to \infty} \beta_n(\rho_k)={n+1 \choose 3}\Vol(\Gamma_0 \backslash \mathbb{H}^3)$. With an abuse of notation we are going to denote by $(\varphi_k)_{k \in \mathbb{N}}$ the subsequence that we get from Lemma \ref{almostev}.

Our goal now is to show that, up to translating each boundary map $\varphi_k$ by an element $g_k \in \psln$, the sequence $g_k \varphi_k$ tends to the Veronese embedding on the vertices of the tiling of $\mathbb{H}^3$ by an ideal regular simplex. Denote by $\mathcal{T}_\textup{reg} \subset \proj^4$ the subset of $4$-tuples which are the vertices of regular ideal tetrahedra. For every element $\underline{\xi}=(\xi_0,\ldots,\xi_3)$ we denote by $\Gamma_{\underline{\xi}}$ the subgroup of $\textup{Isom}(\mathbb{H}^3)$ generated by the reflections along the faces of $\underline{\xi}$.

We start with the following 

\begin{lem}\label{reflection}
Let $\underline{\xi} \in \mathcal{T}^\infty$ be a regular tetrahedron. Consider a sequence of measurable maps $\varphi_k:\proj \rightarrow \flg$. Define

\begin{equation}\label{fullset}
\mathcal{T}^\infty:=\left\{ \underline{\xi} \in \mathcal{T}_\textup{reg} | \lim_{k \to \infty}B_n(\varphi_k(\underline{\xi}))={{n+1}\choose{3}}\Vol(\underline{\xi})\right\}
\end{equation}
where $\varphi_k(\underline{\xi}):=(\varphi_k(\xi_0),\ldots,\varphi_k(\xi_3))$ for every regular tetrahedron $\underline{\xi}=(\xi_0,\ldots,\xi_3) \in \mathcal{T}_\textup{reg}$. 
Suppose that for every $\gamma \in \Gamma_{\underline{\xi}}$ we have that $\gamma\underline{\xi} \in \mathcal{T}^\infty$. Then there exists a sequence $(g_k)_{k \in \mathbb{N}}$, where each $g_k$ is an element of $\psln$, such that 

\[
\lim_{k \to \infty} g_k\varphi_k(\alpha)=\mathcal{V}_n(\alpha)
\]
for every $\alpha \in \bigcup_{i=0}^3 \Gamma_{\underline{\xi}}\xi_i$.
\end{lem}

\begin{proof}
Since by hypothesis the tetrahedron $\underline{\xi}$ is an element of $\mathcal{T}^\infty$, by Lemma \ref{lemma:maximal:limit} we can find a sequence $(g_k)_{k \in \mathbb{N}}$ of elements in $\psln$ such that
\[
\lim_{k \to \infty} g_k\varphi_k(\xi_i)=\mathcal{V}_n(\xi_i),
\]
for $i=0,\ldots,3$. 

We want now to verify that the sequence $(g_k)_{k \in \mathbb{N}}$ is the one we were looking for. In order to do this we need to verify that
\[
\lim_{k \to \infty} g_k\varphi_k(\gamma \xi_i)=\mathcal{V}_n(\gamma \xi_i)
\]
for $i=0,\cdots,3$ and for every $\gamma \in \Gamma_{\underline{\xi}}$. If $\gamma$ is an arbitrary element of $\Gamma_{\underline{\xi}}$ we can write it as $\gamma=r_N \cdot r_{N-1} \ldots \cdot r_1$, where each $r_i$ is a reflection along a face of the tetrahedron $r_{i-1}\cdot \ldots \cdot r_1 \underline{\xi}$. We are going to prove the statement by induction on $N$. If $N=0$ there is nothing to prove. Assume the statement holds for $\gamma'=r_{N-1} \cdot \ldots \cdot r_1$. Denote by $\underline{\eta}=\gamma' \underline{\xi}$. We know that for the vertices of $\underline{\eta}$ we have 
\[
\lim_{k \to \infty} g_k\varphi_k(\eta_i)=\mathcal{V}_n(\eta_i),
\]
for $i=0,\ldots,3$. We want to prove that
\[
\lim_{k \to \infty} g_k\varphi_k(r_N\eta_i)=\mathcal{V}_n(r_N\eta_i),
\]
for $i=0,\ldots,3$. Assume $r_N$ is the reflection along the face of $\underline{\eta}$ whose vertices are $\eta_1,\eta_2$ and $\eta_3$. In particular we have that $r_N \eta_i=\eta_i$ for $i=1,2,3$, so for these vertices the statement holds. We are left to prove that 
\[
\lim_{k \to \infty} g_k\varphi_k(r_N\eta_0)=\mathcal{V}_n(r_N\eta_0).
\]

The sequence $(g_k\varphi_k(r_N\eta_0))_{k \in \mathbb{N}}$ is a sequence of points in $\flg$, which is compact. Hence we can extract a subsequence which converges to a point $\alpha_0 \in \flg$. By Lemma \ref{lemma:general:position} we know that the $4$-tuple $g_k\varphi_k(\underline{\eta})$ is eventually in general position. By the continuity of the Borel cocycle on the set of $4$-tuples in general position we get
\[
\lim_{k \to \infty} B_n(g_k \varphi_k(r_N\eta_0),g_k\varphi_k(\eta_1),\ldots,g_k\varphi_k(\eta_3))=B_n(\alpha_0,\mathcal{V}_n(\eta_1),\ldots,\mathcal{V}_n(\eta_3)).
\] 
At the same time, by hypothesis it follows
\[
\lim_{k \to \infty} B_n(g_k \varphi_k(r_N\underline{\eta}))={{n+1}\choose{3}}\Vol(r_N\underline{\eta})=-{{n+1}\choose{3}}\Vol(\underline{\eta}).
\]
On the other hand, it holds
\[
B_n(\mathcal{V}_n(r_N\underline{\eta}))={{n+1}\choose{3}}\Vol(r_N\underline{\eta})=-{{n+1}\choose{3}}\Vol(\underline{\eta}).
\]
and hence, by a simple comparison argument, we get
\[
B_n(\mathcal{V}_n(r_N\eta_0),\mathcal{V}_n(\eta_1),\ldots,\mathcal{V}_n(\eta_3))=B_n(\alpha_0,\mathcal{V}_n(\eta_1),\ldots,\mathcal{V}_n(\eta_3))=\pm {{n+1}\choose{3}} \nu_3. 
\]

As a consequence we must have $\alpha_0=\mathcal{V}_n(r_N\eta_0)$, but this is equivalent to say that the sequence $(g_k\varphi_k(r_N\eta_0))_{k \in \mathbb{N}}$ satisfies
\[
\lim_{k \to \infty} g_k\varphi_k(r_N\eta_0)=\mathcal{V}_n(r_N\eta_0)
\]
for any convergent subsequence of $(g_k\varphi_k(r_N\eta_0))_{k \in \mathbb{N}}$. Then the statement follows. 
\end{proof}

We are now ready to prove the main theorem. 

\begin{proof}[Proof of Theorem~\ref{convergence}]
Define the set
\[
\mathcal{T}^\infty_\Gamma:=\{\underline{\xi} \in \mathcal{T}^\infty|\forall \gamma\in \Gamma_{\underline{\xi}}: \gamma\underline{\xi} \in \mathcal{T}^\infty\}.
\]

We claim that this set is a set of full measure in $\mathcal{T}_\textup{reg}$. By Lemma~\ref{almostev}, we already know that $\mathcal{T}^\infty$ defined by Equation~(\ref{fullset}) is a set of full measure. For any $\underline{\eta} \in \mathcal{T}_\textup{reg}$ we define the evaluation map 
\[
\textup{ev}_{\underline{\eta}}:\textup{Isom}(\mathbb{H}^3) \rightarrow \mathcal{T}_\textup{reg}, \hspace{10pt} \textup{ev}_{\underline{\eta}}(g):=g \underline{\eta}. 
\]

Set $G^\infty:=\textup{ev}_{\underline{\eta}}^{-1}(\mathcal{T}^\infty)$ and $G^\infty_\Gamma:=\textup{ev}_{\underline{\eta}}^{-1}(\mathcal{T}^\infty_\Gamma)$. Let $\underline{\xi}=g \underline{\eta}$. It holds $\underline{\xi} \in \mathcal{T}^\infty_\Gamma$ if and only if for any $\gamma \in \Gamma_{\underline{\xi}}$ we have that $\gamma \underline{\xi} = \gamma g \underline{\eta} \in \mathcal{T}^\infty$. Since $\Gamma_{\underline{\xi}}=\Gamma_{g \underline{\eta}}=g\Gamma_{\underline{\eta}}g^{-1}$, any element $\gamma \in \Gamma_{\underline{\xi}}$ can be written as $\gamma=g \gamma_0 g^{-1}$, where $\gamma_0 \in \Gamma_{\underline{\eta}}$. Thus, by a simple substitution, we get that $\underline{\xi} \in \mathcal{T}^\infty_\Gamma$ if and only if for every $\gamma_0 \in \Gamma_{\underline{\eta}}$ we have that $g\gamma_0\underline{\eta} \in \mathcal{T}^\infty$. This argument implies that we can write 
\[
G^\infty_\Gamma=\bigcap_{\gamma_0 \in \Gamma_{\underline{\eta}}} G^\infty \gamma_0^{-1}.
\]

All the sets $G^\infty \gamma_0^{-1}$ are sets of full measure, since are right-translated of the set of full measure $G^\infty$ by the element $\gamma_0^{-1}$. Being a countable intersection of full measure sets, also $G^\infty_\Gamma$ has full measure. Hence also $\mathcal{T}^\infty_\Gamma$ has full measure, as claimed.  

Since all regular ideal tetrahedra are in a unique $\textup{Isom}(\mathbb{H}^3)$-orbit, up to conjugating each representation $\rho_k$, we can assume that $\xi=(0,1,e^\frac{\pi i}{3},\infty) \in \mathcal{T}^\infty_\Gamma$. With this assumption we have that $\Gamma_{\underline{\xi}}=\Gamma$, the reflection lattice we started with. By applying Lemma~\ref{reflection}, there must exists a sequence $(g_k)_{k \in \mathbb{N}}$ of elements $g_k \in \psln$ such that
\[
\lim_{k \to \infty} g_k\varphi_k(\gamma \underline{\xi})=\mathcal{V}_n(\gamma \underline{\xi})=\pi_n(\gamma)\mathcal{V}_n(\underline{\xi})
\]
for every $\gamma \in \Gamma$ and hence for every $\gamma \in \Gamma_0$, where $\pi_n:\Gamma_0 \rightarrow \psln$ is the irreducible representation and $\mathcal{V}_n: \proj \rightarrow \flg$ is the Veronese embedding. For every $k \in \mathbb{N}$ we define $\tilde \varphi_k:=g_k\varphi_k$ and $\tilde \rho_k:=g_k \rho_k g_k^{-1}$. We get that

\[
\lim_{k \to \infty} \tilde \rho_k(\gamma)\tilde \varphi_k(\underline{\xi})=\lim_{k \to \infty} \tilde \varphi_k(\gamma \underline{\xi})=\mathcal{V}_n(\gamma \underline{\xi})=\pi_n (\gamma)\mathcal{V}_n(\underline{\xi}),
\]
for every $\gamma \in \Gamma_0$. In particular notice that both sequences $(\varphi_k(\underline{\xi}))_{k \in \mathbb{N}}$ and $(\varphi_k(\gamma \underline{\xi}))_{k \in \mathbb{N}}$ are converging. The element $\gamma$ acts as $\pi_n(\gamma)$ at the limit, hence the sequence $(\tilde \rho_k(\gamma))_{k \in \mathbb{N}}$ cannot diverge and it remains bounded in $\psln$. Hence the sequence of representations $(\tilde \rho_k)_{k \in \mathbb{N}}$ has to be bounded in the character variety $X(\Gamma_0,\psln)$ and there must exists a subsequence of $(\tilde \rho_k)_{k \in \mathbb{N}}$ converging to a suitable representation $\rho_\infty$.

By the continuity of the Borel function on the character variety $X(\Gamma_0,\psln)$ with respect to the pointwise topology, it follows
\[
\beta_n(\rho_\infty)=\lim_{k \to \infty} \beta_n(\tilde \rho_k)=\lim_{k \to \infty} \beta_n(\rho_k)={{n+1}\choose{3}}\textup{Vol}(\Gamma_0 \backslash \mathbb{H}^3).
\]

By~\cite[Theorem 1]{iozzi:articolo} the representation $\rho_\infty$ must be conjugated to the representation $(\pi_n \circ i)$, where $i:\Gamma_0 \rightarrow \psl$ is the standard lattice embedding and $\pi_n:\psl \rightarrow \psln$ is the irreducible representation. Since the argument above holds for every convergent subsequence of $(\tilde \rho_k)_{k \in \mathbb{N}}$, the theorem follows.
\end{proof}

We conclude by noticing that in the proof we exploited crucially the combinatorial structure of the reflection group $\Gamma$.  For this reason it seems unlikely to adapt the proof for more general lattices. 

%**************************
% Bibliografia
%**************************

\vspace{20pt}
Alessio Savini\\
Section de Math\'ematiques,\\
University of Geneva,\\
Rue Du Conseil G\'eneral 7-9,\\
Geneva 1205,\\
\texttt{alessio.savini@unige.ch}\\


\begin{thebibliography}{ABCDE}

\bibitem[Bar96]{bartle:libro} R. G. Bartle
\emph{The elements of integration and Lebesgue measure}, Wiley Classics Library Edition, 1996. 

\bibitem[BCG95]{besson95:articolo} G. Besson, G. Courtois, S. Gallot,
\emph{Entropies et rigidités des espaces localement symétriques de courbure strictement négative}, Geom. Funct. Anal. \textbf{5} (1995), no. 5, 731–799. 

\bibitem[BCG96]{besson96:articolo} G. Besson, G. Courtois, S. Gallot,
\emph{Minimal entropy and Mostow's rigidity theorems}, Ergodic Theory Dynam. Systems \textbf{16} (1996), no. 4, 623–649.

\bibitem[BCG98]{besson98:articolo}  G. Besson, G. Courtois, S. Gallot,
\emph{A real Schwarz lemma and some applications}, Rend. Mat. Appl. (7) \textbf{18} (1998), no. 2, 381–410.

\bibitem[Blo00]{bloch:libro} S. J. Bloch, 
\emph{Higher regulators, algebraic K-theory, and zeta functions of elliptic curves}, CRM Monograph Series, 11. American Mathematical Society, Providence, RI, 2000.

\bibitem[BBF14]{bucher:articolo}  M. Bucher, M. Burger, R. Frigerio, A. Iozzi, Pagliantini P., M. B. Pozzetti,
\emph{Isometric embeddings in bounded cohomology}, J. Topol. Anal. \textbf{6} (2014), no. 1, 1--25 

\bibitem[BBI18]{iozzi:articolo} M. Bucher, M. Burger, A. Iozzi,
\emph{The bounded Borel class and complex representations of $3$-manifold groups}, Duke Math. J. 167 (2018), no. 17, 3129--3169.

\bibitem[BI02]{burger:articolo}  M. Burger, A. Iozzi,
\emph{Boundary maps in bounded cohomology}. Appendix to \emph{Continuous bounded cohomology and applications to rigidity theor} [Geom. Funct. Anal. \textbf{12} (2002), no. 2, 219--280; MR1911660] by M. Burger and N. Monod. Geom. Funct. Anal. \textbf{12} (2002), no. 2, 281--292.

\bibitem[BI09]{iozzi2:articolo} M. Burger, A. Iozzi,
\emph{A useful formula from bounded cohomology}, Géométries à courbure négative ou nulle, groupes discrets et rigidités, 243--292, Sémin. Congr., \textbf{18}, Soc. Math. France, Paris, 2009.

%\bibitem[BI09]{burger4:articolo}  M. Burger, A. Iozzi,
%\emph{A useful formula from bounded cohomology.}, Géométries à courbure négative ou nulle, groupes discrets et rigidités, 243--292, Sémin. Congr., \textbf{18}, Soc. Math. France, Paris, 2009.
%
\bibitem[BM02]{burger2:articolo} M. Burger, N. Monod,
\emph{Continuous bounded cohomology and applications to rigidity theory}, GAFA, Geom. Funct. Anal. \textbf{12} (2002).

\bibitem[DGG16]{gabella:articolo} T. Dimofte, M. Gabella, A. B. Goncharov,
\emph{K-decompositions and $3d$ gauge theories}, Journal of High Energy Physics \textbf{11} (2016), 151

\bibitem[Dun99]{dunfield:articolo}  N. M. Dunfield,
\emph{Cyclic surgery, degrees of maps of character curves, and volume rigidity for hyperbolic manifolds.} Invent. Math. \textbf{136} (1999), no. 3, 623--657.

\bibitem[Fra04]{franc04:articolo} S. Francaviglia,
\emph{Hyperbolic volume of representations of fundamental groups of cusped 3-manifolds}, Int. Math. Res. Not. 2004, no. 9, 425--459.

\bibitem[Fra09]{franc09:articolo} S. Francaviglia,
\emph{Constructing equivariant maps for representations}, Ann. Inst. Fourier (Grenoble) \textbf{59} (2009), no. 1, 393–428. 

\bibitem[FS18]{savini:articolo} S. Francaviglia, A. Savini,
\emph{Volume rigidity at ideal points of the character variety of hyperbolic $3$-manifolds}, Ann. Sc. Norm. Super. Pisa
Cl. Sci.Vol. XX (2020), 1325--1344, issue 4.

\bibitem[Gon93]{Gon93} A. B. Goncharov,
\emph{Explicit construction of characteristic classes}. In S. Gel\'fand and S. Gindikin (eds.), \emph{I. M. Gelfand Seminar},Advances in SovietMathematics, 16, Part 1.
American Mathematical Society, Providence, R.I., 1993, 169--210

\bibitem[Gro82]{gromov:articolo} M. Gromov,
\emph{Volume and bounded cohomology}, Publ. Math. Inst. Hautes \'Etudes Sci. \textbf{56} (1982), 5--99. 

\bibitem[Gui17]{guilloux:articolo} A. Guilloux,
\emph{Volumes of representations and birationality of the peripheral holonomy}, Experiment. Math. \textbf{27} (2017), 422--427. 

%\bibitem[Hum75]{humphreys:libro} J. E. Humphreys, 
%\emph{Linear algebraic groups}, Springer-Verlag, New York, 1975, Graduate Texts in Mathematics, No. 21.

\bibitem[KT16]{tillmann:articolo} B. Klaff, S. Tillmann,
\emph{A birationality result for character varieties}, Mathematical Research Letters \textbf{23} (2016), no.4, 1099--1110.

\bibitem[Mon01]{monod:libro} N. Monod,
\emph{Continuous bounded cohomology of locally compact groups}, Lecture Notes in Mathematics, Springer-Verlag, Berlin, 2001
%
%\bibitem[MS84]{morgan:articolo} J. W. Morgan, P. B. Shalen,
%\emph{Valuations, trees and degeneration of hyperbolic structures I}, Ann. of Math., \textbf{120}, 1984, 401--476
%
\bibitem[Mos68]{mostow:articolo} G.D. Mostow,
\emph{Quasi-conformal mappings in $n$ space and the rigidity of the hyperbolic space forms}, Inst. Hautes \'Etudes Sci. Publ. Math. \textbf{34} (1968), 53--104. 

\bibitem[Rat06]{ratcliffe:libro} J.G. Ratcliffe. 
\emph{Foundations of hyperbolic manifolds}, Graduate Texts in
Mathematics, Springer, New York, second edition, 2006.

\bibitem[Sav20]{Sav20} A. Savini,
\emph{Rigidity at infinity for lattices in rank-one Lie groups}, J. Top. Anal. \textbf{12} (2020), n. 1, 113--130.

%\bibitem[Zag07]{Zag07} D. Zagier,
%\emph{The Dilogarithm Function}, in \emph{Frontiers in Number Theory, Physics, and Geometry II}, 3--65, 2007.

%\bibitem[Zim84]{zimmer:libro} R. J. Zimmer,
%\emph{Ergodic theory and semisimple groups}, Monographs in Mathematics, vol. 81, Birkh\"auser Verlag, 1984

\end{thebibliography}
\end{document}